\theoremstyle{plain}
\newtheorem{theorem}{Theorem}
\newtheorem{lemma}[theorem]{Lemma}
\newcommand{\rea}{\ensuremath{\mathbf{R}}\xspace}
\newcommand{\ints}{\ensuremath{\mathbf{Z}}\xspace}
\newcommand{\intd}[1]{\,\mathrm{d}#1 \,}
\newcommand{\ceil}[1]{\left\lceil #1 \right\rceil}
\newcommand{\closure}[1]{\ensuremath{\overline{#1}}\xspace}
\newcommand{\leb}{\ensuremath{\mathcal{L}}\xspace}
\newcommand{\E}{\ensuremath{\mathbf{E}}\xspace}
\newcommand{\myvec}[1]{\ensuremath{\overline{#1}}\xspace}
\newcommand{\Dset}{\ensuremath{\mathcal{U}}\xspace}
\newcommand{\hmeas}{\ensuremath{\mathcal{H}}\xspace}
\newcommand{\dimh}{\ensuremath{\dim_{\mathrm{H}}}\xspace}
\newcommand{\dimf}{\ensuremath{\dim_{\mathrm{F}}}\xspace}
\newcommand\restr[2]{{
  \left.\kern-\nulldelimiterspace
  #1
  \vphantom{|}
  \right|_{#2}
  }}
\DeclareMathOperator{\supp}{supp}
\DeclareMathOperator{\e}{e}
\DeclareMathOperator{\conv}{conv}
\DeclareMathOperator{\dist}{dist}
\keywords{Random measure, Fourier dimension, Salem set}
\subjclass[2010]{60G57, 42A38, 28A80}
\begin{document}
\title{Fourier dimension of random images}
\author{Fredrik Ekstr\"om}
\address{Centre for Mathematical Sciences, Lund University,
Box 118, 221 00 Lund, Sweden}
\email{fredrike@maths.lth.se}

\begin{abstract}
Given a compact set of real numbers, a random $C^{m + \alpha}$-diffeo\-morph\-ism is
constructed such that the image of any measure concentrated on the set 
and satisfying a certain condition involving a real number $s$, almost 
surely has Fourier dimension greater than or equal to $s / (m + \alpha)$.
This is used to show that every Borel subset of the real numbers of Hausdorff
dimension $s$ is $C^{m + \alpha}$-equivalent
to a set of Fourier dimension greater than or equal to $s / (m + \alpha)$. In particular every Borel set is
diffeomorphic to a Salem set, and the Fourier dimension is not invariant under $C^m$-diffeomorphisms
for any $m$.
\end{abstract}

\maketitle

\section{Introduction}
The \emph{Fourier dimension} of a Borel probability measure on
$\rea^d$ measures the polynomial rate of decay of the Fourier
transform of $\mu$, and is defined to be the supremum of all
$s$ in $[0, d]$ such that $|\widehat\mu(\xi)| |\xi|^{s / 2}$
is bounded. The Fourier dimension of a Borel set $F$ is the
supremum of the Fourier dimensions of all probability measures
that give full measure to $F$. It can be shown that if the
Fourier dimension of $\mu$ is greater than $s$ then
	$$
	\iint |y - x|^{-s} \intd\mu(x) \intd\mu(y) < \infty
	$$
(see \cite[Lemma~12.12]{mattila95}), and it follows that the Hausdorff
dimension of $F$ is always greater than or equal to the
Fourier dimension of $F$. If the Fourier and
Hausdorff dimensions of $F$ are equal, then $F$ is called
a \emph{Salem set}. An example of a set that is not a
Salem set is the ternary Cantor set, which has Fourier
dimension $0$. (More generally, a compact proper subset
of $[0, 1]$ that is invariant under multiplication mod $1$ by
an integer greater than or equal to $2$ cannot support a measure
whose Fourier transform tends to $0$ at infinity.)

This paper gives a construction of a random $C^{m + \alpha}$-diffeomorphism
$f_\omega: \rea \to \rea$, given a compact subset $E$ of \rea.
If $\lambda$ is a probability measure on $E$ such that
the number of connected components of $E^c$ that are included
in any given interval $J$ and have length at least $x^{-1}$ is bounded below (up to constants)
by $\log \lambda(J) + s\log x$, then the image of $\lambda$
under $f_\omega$ almost surely has Fourier dimension greater than or equal to
$s / (m + \alpha)$. The mentioned condition is satisfied for example if $E$
is the attractor of a ``nice'' iterated function system and $\lambda$
is a probability measure on $E$ such that $\lambda(I) \leq A|I|^s$
for every interval $I$.

Taking $E = C$ in the construction of $f_\omega$, where $C$ is a certain fat
Cantor set, it is shown that for any Borel set $F$ of positive
$s$-dimensional Hausdorff measure there is a real number $t$ such that
almost surely $\dimf f_\omega(C \cap (F + t)) \geq s / (m + \alpha)$.
This is used to prove that for any Borel set $F$ there is a
$C^{m + \alpha}$-diffeomorphism $f$ such that $\dimf f(F) \geq s / (m + \alpha)$,
where $s$ is the Hausdorff dimension of $F$ (even if the $s$-dimensional
Hausdorff measure of $F$ is $0$). In particular, every
Borel subset of \rea is diffeomorphic to a Salem set, and
the Fourier dimension is not $C^m$-invariant for any $m$.

A remaining question is 
whether there exists a Borel set $F$ 
such that $\dimf f(F) \leq \dimh F / (m + \alpha)$ for
every $C^{m + \alpha}$-diffeomorphism $f$. One might also ask
whether the Fourier dimension is invariant under $C^\infty$-diffeomorphisms,
since the statements that are proved here become empty when $m \to \infty$.
It follows from previous work (see below) that this is not the case for
subsets of $\rea^2$, and not for subsets of \rea if non-invertible
$C^\infty$-functions are considered.

\subsection{Related work}
It was shown by Salem in \cite{salem51} that there exist Salem subsets
of \rea of any dimension between $0$ and $1$, using a construction of
a Cantor set where the contraction ratios are chosen randomly.

The one-dimensional Brownian motion is almost surely H\"older continuous
with any exponent less than $1 / 2$, and more generally the fractional Brownian
motion with Hurst index $\alpha$ is H\"older continuous with any exponent less
than $\alpha$ for $\alpha \in (0, 1)$. It was shown by Kahane that
if $E$ is any compact subset of \rea  of Hausdorff dimension $s \leq \alpha$,
then the image of $E$ under fractional Brownian motion with Hurst index $\alpha$
is almost surely a Salem set of dimension $s / \alpha$
(see \cite[Chapters~17 and 18]{kahane85}. In fact Kahane proved a more general
statement, which allows both $E$ and the image of $E$ to lie in higher dimensional
Euclidean spaces).

In \cite{bluhm99}, Bluhm gave a method for randomly perturbing a class 
of self-similar measures on $\rea^d$, such that the perturbed measure
almost surely has Fourier dimension equal to the similarity dimension
of the original measure. For $d = 1$, the uniform measures on Cantor
sets with constant contraction ratio are among the measures considered
by Bluhm, and if the parameters in the construction are chosen suitably
then the perturbation is a bi-Lipschitz map. Thus it follows
from Bluhm's result that such Cantor sets are bi-Lipschitz equivalent to
Salem sets.

An explicit example of a Salem set is given by the set of $\alpha$-badly
approximable numbers, that is, the set
	$$
	E(\alpha) =
	\bigcap_{n = 1}^\infty \bigcup_{k = n}^\infty \left\{
	x \in [0, 1]; \, \| kx \| \leq k^{-(1 + \alpha)} \right\},
	$$
where $\| \cdot \|$ denotes the distance to the nearest integer. By a theorem
of Jarn\'{\i}k \cite{jarnik29} and Besicovitch \cite{besicovitch34} the set
$E(\alpha)$ has Hausdorff dimension $2 / (2 + \alpha)$ for $\alpha > 0$, and
Kaufman \cite{kaufman81} showed that there is a probability measure on $E(\alpha)$ with
Fourier dimension $2 / (2 + \alpha)$ (see also Bluhm's paper \cite{bluhm98}).

It follows from a result of Kaufman \cite{kaufman76} that any $C^2$-curve in $\rea^2$
with non-zero curvature has Fourier dimension $1$. Since line segments in $\rea^2$
have Fourier dimension $0$ this shows that the Fourier dimension in $\rea^2$ is
not in general invariant under $C^\infty$-diffeomorphisms.

In \cite{ekstrom}, subsets $A$ and $B$ of \rea were constructed
such that
	$$
	\max\left(\dimf A, \, \dimf B\right) < 1 \qquad \text{and} \qquad
	\dimf A \cup B = 1,
	$$
and they can be taken to be included in $[1, 2]$. If $f(x) = x^2$ then
	$$
	\dimf f\left(\sqrt{A} \cup \left(-\sqrt{B}\right)\right) = \dimf A \cup B = 1
	$$
and
	$$
	\dimf \left(\sqrt{A} \cup \left(-\sqrt{B}\right)\right) =
	\max\left(\dimf \sqrt{A}, \, \dimf \sqrt{B}\right)
	$$
since $\sqrt{A}$ and $-\sqrt{B}$ are separated (see \cite[Theorem~2]{EPS15}).
Thus $f$ changes the Fourier dimension of at least one of $\sqrt{A}$,
$\sqrt{B}$ and $\sqrt{A} \cup (-\sqrt{B})$, showing that the Fourier dimension
in \rea is not in general invariant under $C^\infty$-functions.

\subsection{Some definitions and notation}
The \emph{Fourier transform} of a probability measure $\mu$ on $\rea^d$
is defined for $\xi \in \rea^d$ by
	$$
	\widehat\mu(\xi) = \int \e(- \xi \cdot x) \intd\mu (x),
	$$
where $\e(y) = e^{2\pi i y}$ and $\cdot$ is the Euclidean inner product.
If $f$ and $g$ are complex-valued functions with the same domain, then
	$$
	f(\xi) \lesssim g(\xi)
	$$
means that there is a constant $C$ such that
	$$
	|f(\xi)| \leq C |g(\xi)|
	$$
for all $\xi$ in the domain. The Fourier dimension of $\mu$ is then
defined by
	$$
	\dimf \mu = \sup \left\{s\in[0, d]; \, \widehat\mu(\xi) \lesssim |\xi|^{-s/2} \right\},
	$$
and the Fourier dimension of a Borel set $F$ is defined by
	$$
	\dimf F = \sup\left\{ \dimf \mu; \,
	\mu \text{ is a probability measure on $\rea^d$ and }
	\mu(F) = 1 \right\}.
	$$

If $\mu$ is a measure on a measurable space $X$ and $f: X \to Y$ is a measurable function,
then $f\mu$ denotes the transportation of $\mu$ by $f$, namely, the measure on $Y$ defined
by
	$$
	(f\mu)(A) = \mu(f^{-1}(A)).
	$$
The formula for change of variable in Lebesgue integrals thus reads
	$$
	\int_Y g \intd{(f\mu)} = \int_X g \circ f \intd{\mu},
	$$
where $g$ is a real- or complex-valued function on $Y$.

Any increasing function
$\varepsilon: [0, \infty) \to [0, \infty)$ such that
	$$
	\varepsilon(0) = \lim_{t \to 0} \varepsilon(t) = 0
	$$
is called a \emph{modulus of continuity}. A function $g: \rea \to \rea$
is uniformly continuous with modulus $\varepsilon$ if
	$$
	\left| g(y) - g(x) \right| \leq \varepsilon(|y - x|)
	$$
for all $x, y \in \rea$, and a set $G$ of functions
$\rea \to \rea$ is \emph{uniformly equicontinuous} with modulus $\varepsilon$
if every $g \in G$ is uniformly continuous with modulus $\varepsilon$.
A function is \emph{H\"older continuous} with exponent $\alpha \in (0, 1]$
if it is uniformly continuous with modulus $\varepsilon(t) = C t^\alpha$
for some constant $C$.

\section{Constructions and main results} \label{section2}
This section contains the main constructions and the statements of the
main results. Proofs are given in later sections.

\begin{theorem} \label{endthm1}
Let $F$ be a Borel subset of \rea. Then there exists a
$C^{m + \alpha}$-diffeomorphism $f: \rea \to \rea$ such that
	$$
	\dimf f(F) \geq \frac{\dimh F}{m + \alpha}.
	$$
\end{theorem}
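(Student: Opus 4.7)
The plan is to derive the theorem from the intermediate statement announced in the introduction: for the fat Cantor set $C$ and the random $C^{m+\alpha}$-diffeomorphism $f_\omega$ constructed with $E = C$, every Borel set $A$ of positive $s$-dimensional Hausdorff measure admits a translation $t$ such that almost surely $\dimf f_\omega(C \cap (A + t)) \geq s/(m+\alpha)$. I would first reduce to compact subsets of controlled Frostman content. Set $s = \dimh F$ (and assume $s > 0$, else take $f = \mathrm{id}$). By capacitability of Hausdorff measures on Borel sets, for each $s' < s$ one can extract a compact $K_{s'} \subseteq F$ with $0 < \hmeas^{s'}(K_{s'}) < \infty$, to which the intermediate statement applies.

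The next step is to unify the translations across a sequence $s_n \nearrow s$. The intermediate result, as stated, gives one translation per Frostman subset; but inspecting its proof I expect it to deliver in fact a full-measure set of good translations $T_{s'} \subseteq [0,1]$, the natural route being a Marstrand-type slicing argument that shows $C \cap (K_{s'}+t)$ supports a Frostman measure satisfying the gap condition for Lebesgue-almost every $t$ in a fixed interval. Granting this, pick $t \in \bigcap_n T_{s_n}$, which is nonempty as a countable intersection of full-measure sets. With $t$ fixed, each event $\Omega_n = \{\omega : \dimf f_\omega(C \cap (K_{s_n}+t)) \geq s_n/(m+\alpha)\}$ has probability one, so $\bigcap_n \Omega_n$ is nonempty; choose $\omega^*$ in it. The map $f(x) := f_{\omega^*}(x + t)$ is a $C^{m+\alpha}$-diffeomorphism, and since $C \cap (K_{s_n}+t) \subseteq K_{s_n}+t \subseteq F + t$ one has $f_{\omega^*}(C \cap (K_{s_n}+t)) \subseteq f(F)$. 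Monotonicity of Fourier dimension under inclusion then gives
\[
\dimf f(F) \;\geq\; \dimf f_{\omega^*}\bigl(C \cap (K_{s_n}+t)\bigr) \;\geq\; \frac{s_n}{m+\alpha}
\]
for every $n$, and letting $n \to \infty$ yields the desired bound.

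The main obstacle is the step in which translations are unified: one must ensure that the set of good $t$ in the intermediate result has positive (ideally full) Lebesgue measure in a fixed interval, so that a single $t$ can be chosen that serves the whole sequence $(K_{s_n})$ simultaneously. If the construction in the preceding sections produces only a single translation specifically tied to each Frostman subset, one must instead align the translations by other means — for instance, by working with a single compact $K \subseteq F$ having $\dimh K = s$ and showing that the gap condition is realised on $C \cap (K+t)$ for almost every $t$ in some interval, thereby yielding the required countable family of admissible measures supported on a single shifted slice.
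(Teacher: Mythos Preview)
Your reduction to compact Frostman subsets and your invocation of Theorem~\ref{thm2} are on the right track, and you correctly isolate the crux: unifying the translations across a sequence $s_n \nearrow s$. But the hope that the set of good translations has full measure is not borne out by the argument behind Theorem~\ref{thm2}. The translation is produced in Lemma~\ref{tlemma} from the identity $\int \mu_t(C)\,dt = \mu(\rea)\,\leb(C) > 0$, which only shows that $\{t : \mu_t(C) > 0\}$ has \emph{positive} Lebesgue measure; for different Frostman measures $\mu_n$ tied to different exponents $s_n$ there is no evident reason these positive-measure sets should have a common point. Your fallback---a single compact $K \subseteq F$ with $\dimh K = s$---is not immediately available when $\hmeas^s(F) = 0$, which is precisely the case where the unification problem arises (if $\hmeas^s(F) > 0$ a single Frostman measure already serves every $s' < s$ and your argument goes through). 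So the gap you flag is real and unresolved.

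The paper sidesteps the unification problem altogether by \emph{not} seeking a single pair $(t,\omega)$. It first carves $F$ into countably many pieces $F \cap J_k$ lying in pairwise disjoint open intervals $J_k$, chosen recursively (by bisecting at a median of $\hmeas^{d_k}$ on a suitable compact subset) so that $\hmeas^{d_k}(F \cap J_k) > 0$ for an increasing sequence $d_k \nearrow s$. Theorem~\ref{thm2} is then applied to each piece separately, yielding its own translation $t_k$ and its own $\omega_k \in \Omega$. The key point is that each $f_{\omega_k}$ differs from the identity by a function whose $m$-th derivative vanishes on $C$ and has the uniform modulus of continuity from Theorem~\ref{equicontthm}; so after shifting into the disjoint windows $J_k$, these local perturbations satisfy the hypotheses of Lemma~\ref{gensmoothlemma} and can be summed into a single $C^{m+\alpha}$-diffeomorphism $f$ that agrees, up to an additive constant, with $x \mapsto f_{\omega_k}(x + t_k)$ on each window. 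This gluing construction---rather than any alignment of translations---is the missing ingredient.
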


The proof of Theorem~\ref{endthm1} is based on Theorem~\ref{equicontthm}
and Theorem~\ref{thm2} below, and is given at the end of Section~\ref{lastsection}.

\subsection{The random map $f_\omega$ on a compact set $E$}
Given a compact subset $E$ of \rea, let $D$ be the set of bounded connected
components of $E^c$. Choose non-negative numbers $\{ \delta_\Dset \}_{\Dset \in D}$
whose sum is finite
and let $\Omega = \text{\Large{$\times$}}_{\Dset \in D} [0, \delta_\Dset]$.
For $\omega \in \Omega$, define $f_\omega$ on $E$ by
	$$
	f_\omega(x) = x + \sum_{\Dset \subset (-\infty, x)} \omega_\Dset,
	$$
where the sum is over those $\Dset \in D$ that lie to the left
of $x$ (thus $f_\omega$ can be thought of as increasing the size of
each hole \Dset in $E$ by an additive amount of $\omega_\Dset$).
Fix some probability measure $\nu$ on $[0, 1]$ such that
$\lim_{|\xi| \to \infty} \widehat\nu(\xi) = 0$ and let
$\nu_\Dset$ be the image of $\nu$ under the map $x \mapsto \delta_\Dset x$.
Let $P$ be the product measure on $\Omega$ that projects to $\nu_\Dset$
on the $\Dset$-coordinate.

If $J$ is an interval and $x > 0$, let
	$$
	\psi(J, x) = \# \left\{ \Dset \in D;
	\, \Dset \subset J \text{ and } \delta_\Dset \geq x^{-1} \right\}.
	$$

\begin{theorem} \label{thm1}
Let $s \in [0, 1]$ and let $\lambda$ be a probability measure on $E$.
Suppose that there are constants $a$ and $b > 0$ and some $x_0$ such that
	$$
	\psi(J, x) \geq a + b \left(
	\log \lambda(J) + s \log x \right)
	$$
for every interval $J$ and every $x \geq x_0$. Then almost
surely $\dimf f_\omega \lambda \geq s$.
\end{theorem}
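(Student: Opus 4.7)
The plan is a second-moment computation on $\widehat{f_\omega\lambda}$. For $y < x$ in $E$, the gaps $\Dset \in D$ with $\Dset \subset (-\infty, x)$ but $\Dset \not\subset (-\infty, y)$ are precisely those contained in $(y, x)$ (one checks this using $y, x \in E$), so
\[
f_\omega(x) - f_\omega(y) = (x - y) + \sum_{\Dset \subset (y,x)} \omega_\Dset.
\]
Since the $\omega_\Dset$ are independent and each has Fourier transform $\xi \mapsto \widehat\nu(\delta_\Dset \xi)$,
\[
\E\bigl[\e(-\Xi(f_\omega(x) - f_\omega(y)))\bigr] = \e(-\Xi(x-y)) \prod_{\Dset \subset (y,x)} \widehat\nu(\delta_\Dset \Xi).
\]
Expanding $|\widehat{f_\omega\lambda}(\Xi)|^2 = \iint \e(-\Xi(f_\omega(x)-f_\omega(y)))\,d\lambda(x)\,d\lambda(y)$ and applying Fubini therefore gives
\[
\E\bigl[|\widehat{f_\omega\lambda}(\Xi)|^2\bigr] \leq 2\iint_{y<x}\prod_{\Dset \subset (y,x)}|\widehat\nu(\delta_\Dset\Xi)|\,d\lambda(y)\,d\lambda(x).
\]

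Given any $t < s$, I would fix $\epsilon \in (0, s-t)$ and use $\widehat\nu(\xi) \to 0$ at infinity to choose $T$ so large that $\rho := \sup_{|u| \geq T}|\widehat\nu(u)|$ satisfies $\rho \leq \exp(-2t/(b\epsilon))$. Keeping only the factors with $\delta_\Dset \geq T/\Xi$ (each of which is bounded by $\rho$) and applying the hypothesis to $J = (y,x)$ with parameter $\Xi/T$,
\[
\#\{\Dset \subset (y,x) : \delta_\Dset \geq T/\Xi\} \geq a + b\log\lambda((y,x)) + bs\log(\Xi/T).
\]
I would then split the $(y,x)$-integral at $\lambda((y,x)) = \Xi^{-(s-\epsilon)}$. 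On the region where $\lambda((y,x)) \geq \Xi^{-(s-\epsilon)}$, the exponent above is at least $(b\epsilon/2)\log\Xi$ for $\Xi$ large, so the product is bounded by $\rho^{(b\epsilon/2)\log\Xi} \leq \Xi^{-t}$. On the complement, the product is trivially at most $1$, and a straightforward distribution-function estimate shows the $\lambda\otimes\lambda$-measure of this region is $\lesssim \Xi^{-(s-\epsilon)} \leq \Xi^{-t}$. Combining the two regions gives $\E[|\widehat{f_\omega\lambda}(\Xi)|^2] \lesssim \Xi^{-t}$ for every $t < s$ and every sufficiently large $|\Xi|$.

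To pass from this moment bound to the almost-sure pointwise estimate $|\widehat{f_\omega\lambda}(\Xi)| \lesssim |\Xi|^{-t/2}$ required for $\dimf f_\omega\lambda \geq t$, I would use that $\widehat{f_\omega\lambda}$ is Lipschitz in $\Xi$ with a constant uniform in $\omega$, since $\supp f_\omega\lambda$ lies in a fixed bounded set (because $\sum_{\Dset \in D}\delta_\Dset < \infty$). Markov's inequality on a sufficiently fine grid of frequencies, together with Borel--Cantelli, then yields the pointwise bound simultaneously for all large $|\Xi|$; letting $t$ approach $s$ along a countable sequence gives $\dimf f_\omega\lambda \geq s$ almost surely.

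The step I anticipate to be the main technical obstacle is precisely this last promotion: a naive dyadic grid $\Xi = 2^n$ is too sparse for the Lipschitz bound to cover the gaps without sacrificing most of the exponent, so one must refine the grid to spacing $\sim \Xi^{-c}$ for an appropriate $c$, or produce a sharper higher-moment estimate in the style of Kahane's treatment of random Fourier series, in order to preserve the exponent $t$ in the almost-sure supremum.
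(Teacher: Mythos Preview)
Your second-moment computation is correct and coincides with the $q=1$ case of the paper's argument: your splitting at $\lambda((y,x)) = \Xi^{-(s-\varepsilon)}$ is the $q=1$ version of the paper's decomposition into a set $B_r$ and its complement, and your bound on the small-mass region is the $q=1$ instance of the paper's estimate $\lambda^{2q}(B_r) \leq (2qr)^q$. You are also right that the second moment alone is not enough: for $s \leq 1$, the bound $\E|\widehat{f_\omega\lambda}(\Xi)|^2 \lesssim |\Xi|^{-t}$ combined with any grid/Lipschitz/Borel--Cantelli scheme loses at least a full power of $|\Xi|$ and yields no positive lower bound on the Fourier dimension. The paper resolves this exactly as you propose in your last sentence: it proves $\E|\widehat{f_\omega\lambda}(\xi)|^{2q} \lesssim |\xi|^{-sq+1}$ for every positive integer $q$ and then invokes a standard lemma of Kahane to obtain the almost-sure estimate $|\widehat{f_\omega\lambda}(\xi)| \lesssim |\xi|^{-s/2+\varepsilon}$.

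The ingredient you have not yet supplied is the combinatorics for general $q$. Expanding $|\widehat{f_\omega\lambda}(\xi)|^{2q}$ gives an integral over $(\overline{x}, \overline{y}) \in E^{2q}$, and the coefficient of $\omega_\Dset$ in $\sum_i (f_\omega(y_i) - f_\omega(x_i))$ is the value on $\Dset$ of the step function $h_{\overline{x},\overline{y}}(z) = \#\{i: z < y_i\} - \#\{i: z < x_i\}$, which may now be any integer in $\{-q,\ldots,q\}$. Only gaps with $h_{\overline{x},\overline{y}} \neq 0$ contribute decay, so the analogue of your large-mass region is the set of $(\overline{x},\overline{y})$ admitting an interval $J$ with $\lambda(J) \geq r$ on which $h_{\overline{x},\overline{y}}$ is nowhere zero; there your hypothesis-counting argument carries over verbatim and produces a contribution of order $r^q$ after optimising. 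The one genuinely new step is bounding the complementary set $B_r$: if $(\overline{x},\overline{y}) \in B_r$ then each $x_i$ lies within $\lambda$-distance $r$ of some $y_j$ (since $h$ jumps by $+1$ at $x_i$ and would otherwise remain nonzero on an interval of $\lambda$-mass exceeding $r$), so for fixed $\overline{y}$ the admissible $\overline{x}$ lie in a set of $\lambda^q$-measure at most $(2qr)^q$, whence $\lambda^{2q}(B_r) \leq (2qr)^q$. This is the missing idea; once it is in place, balancing the two terms gives the required higher-moment estimate and the proof closes.
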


\subsection{Extending $f_\omega$ to \rea} \label{extsec}
Let $\varphi$ be an increasing $C^\infty$-function on \rea
that is $0$ on $(-\infty, 0]$ and $1$ on $[1, \infty)$. Then
	$$
	f_\omega(x) = x + \sum_{\Dset \in D} \omega_\Dset
	\varphi\left( \frac{x - \inf \Dset}{|\Dset|} \right)
	$$
is an extension of $f_\omega$ to all of \rea. Let $m$ be
a positive integer and let $\alpha \in [0, 1]$. From now on 
choose $\delta_\Dset = |\Dset|^m \delta(|\Dset|)$, where
	$$
	\delta(t) =
	\begin{cases}
	\frac{1}{\max\left( -\log t, \log 2 \right)} &
	\text{ if } \alpha = 0 \\
	t^\alpha &
	\text{ if } \alpha \in (0, 1].
	\end{cases}
	$$

\begin{theorem}	\label{equicontthm}
The function $f_\omega$ is a $C^{m + \alpha}$-diffeomorphism for every
$\omega \in \Omega$, and $\{f_\omega^{(m)}\}_{\omega \in \Omega}$
is uniformly equicontinuous with modulus
$2 \| \varphi^{(m + 1)} \|_\infty \, \delta$.
\end{theorem}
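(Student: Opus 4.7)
My plan is to prove $C^m$-regularity of $f_\omega$ by termwise differentiation, then quantify the modulus of continuity of $f_\omega^{(m)}$, and finally verify bijectivity.

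\emph{Termwise differentiation.} Writing $g_\Dset(x) = \omega_\Dset \varphi((x-\inf \Dset)/|\Dset|)$, I observe that for $k \geq 1$ the term $g_\Dset^{(k)}$ is supported in $\closure\Dset$ and vanishes at $\inf \Dset$ and $\sup \Dset$ (since $\varphi$ is locally constant outside $[0,1]$). Consequently at most one $\Dset$ contributes to $\sum g_\Dset^{(k)}(x)$ at any given $x$, and the pointwise bound $|g_\Dset^{(k)}(x)| \leq |\Dset|^{m-k}\delta(|\Dset|)\|\varphi^{(k)}\|_\infty$ tends to $0$ uniformly in $x$ as $|\Dset| \to 0$, for every $k \leq m$. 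Since only finitely many $\Dset$ exceed any given size, the partial sums of $\sum g_\Dset^{(k)}$ therefore converge uniformly, and induction via the standard termwise-differentiation theorem yields $f_\omega \in C^m$. The base case $k=0$ uses $\sum_{\Dset} |\Dset|^m \delta(|\Dset|) < \infty$.

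\emph{Modulus of continuity.} Fix $x < y$. By the support property just noted, at most two intervals contribute to $f_\omega^{(m)}(y) - f_\omega^{(m)}(x)$: the component containing $x$ and the component containing $y$ (either may be absent if the corresponding endpoint lies in $E$). If both $x$ and $y$ belong to the same component $V$, then $y - x \leq |V|$ and the mean value theorem applied to $\varphi^{(m)}$ gives
$$|f_\omega^{(m)}(y) - f_\omega^{(m)}(x)| \leq \omega_V |V|^{-m-1}\|\varphi^{(m+1)}\|_\infty (y-x) \leq \tfrac{\delta(|V|)}{|V|}(y-x)\|\varphi^{(m+1)}\|_\infty.$$
If instead $x \in V$ and $y \in W$ with $V \neq W$, then $g_V^{(m)}(\sup V) = 0$ and the same mean-value argument yields a bound of the above form with $y-x$ replaced by $\min(|V|, y-x)$, and symmetrically for $W$ and $y$. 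In each case the problem reduces to showing that $\delta(L)\rho/L \leq \delta(y-x)$ up to a factor of $2$, whenever $\rho \leq \min(L, y-x)$: if $L \leq y-x$ this follows from $\rho \leq L$ and monotonicity of $\delta$, while if $L > y-x$ it reduces to comparing $\delta(L)/L$ with $\delta(y-x)/(y-x)$. For $\delta(t) = t^\alpha$ with $\alpha \in (0,1]$ the ratio is $((y-x)/L)^{1-\alpha} \leq 1$, and for the logarithmic $\delta$ a direct analysis of the extrema of $t \mapsto \delta(t)/t$ shows the ratio is at most $2$. Summing the at most two contributions yields $|f_\omega^{(m)}(y) - f_\omega^{(m)}(x)| \leq 2\|\varphi^{(m+1)}\|_\infty \delta(y-x)$.

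\emph{Bijectivity.} Because $\omega_\Dset \geq 0$ and $\varphi$ is increasing, $f_\omega'(x) \geq 1$ for all $x$, so $f_\omega$ is strictly increasing. The bound $|f_\omega(x) - x| \leq \sum_\Dset \omega_\Dset < \infty$ implies $f_\omega(x) \to \pm\infty$ as $x \to \pm\infty$, so $f_\omega : \rea \to \rea$ is a bijection. The inverse function theorem combined with the chain rule (and the fact that $(f_\omega^{-1})^{(m)}$ is a rational expression in the bounded derivatives $f_\omega^{(j)}$ for $j \leq m$ with the bounded-below quantity $f_\omega'$ in the denominator) then transfers the regularity to $f_\omega^{-1}$.

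The main obstacle will be the last modulus comparison in the logarithmic case $\alpha = 0$, where $t \mapsto \delta(t)/t$ fails to be globally monotone on $(0,1]$; the factor $2$ in the statement is precisely what absorbs the worst-case ratio near the interface $t = 1/2$ between the two branches in the definition of $\delta$. Everything else is a routine verification.
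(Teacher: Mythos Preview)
Your argument is correct and follows essentially the same route as the paper: bound the oscillation of each $g_\Dset^{(m)}$ via the mean value theorem and the behaviour of $\delta(t)/t$, then piece together the contributions. The paper packages the piecing-together step as its Lemma~\ref{gensmoothlemma} (which it reuses later in the proof of Theorem~\ref{endthm1}), whereas you carry out the same two-case analysis inline. One genuine technical difference is how $C^m$-regularity is established: the paper's Lemma~\ref{gensmoothlemma} proves existence of $g^{(m)}$ at points of $E$ by a direct Taylor-type estimate on $g(x+h)-g(x)$, while you obtain it more cleanly from uniform convergence of the termwise-differentiated series (only one $g_\Dset^{(k)}$ is nonzero at each point for $k\geq 1$). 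Your observation that $t\mapsto\delta(t)/t$ is \emph{not} monotone when $\alpha=0$ is sharper than the paper, which simply asserts monotonicity; however, your explanation of how the factor $2$ absorbs this is slightly off, since in the two-component case that factor is already spent summing the two contributions, so the worst case pushes the constant to roughly $2\cdot\frac{2}{e\log 2}\approx 2.12$ rather than $2$. This is harmless for every application in the paper (and, amusingly, the paper's own proof has the identical defect via its monotonicity claim).
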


\subsection{Consequences of Theorem~\ref{thm1}}
\begin{theorem}	\label{iterthm}
Let $\Phi = \{ F_1, \ldots, F_N \}$ be an iterated function system
on \rea consisting of contracting $C^{1 + \beta}$ diffeomorphisms.
Take $E$ to be the attractor of $\Phi$ and assume that the interiors
of the convex hulls of $F_i(E)$ and $F_j(E)$ are disjoint whenever $i \neq j$.
Let $\lambda$ be a probability measure on $E$ such that $\lambda(I) \leq A |I|^s$
for every interval $I$. Then almost surely
$\dimf f_\omega \lambda \geq s / (m + \alpha)$.
\end{theorem}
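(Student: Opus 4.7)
My approach is to apply Theorem~\ref{thm1} with exponent $s/(m+\alpha)$. The task then reduces to verifying the counting condition
$$\psi(J,x) \geq a + b\bigl(\log\lambda(J) + \tfrac{s}{m+\alpha}\log x\bigr)$$
for every interval $J$ and every sufficiently large $x$. Using the hypothesis $\lambda(J) \leq A|J|^s$, and absorbing $\log A$ into $a$, it suffices to prove
$$\psi(J,x) \gtrsim \log\bigl(|J|\, x^{1/(m+\alpha)}\bigr)$$
whenever the right-hand side is positive; otherwise the inequality is automatic by enlarging $a$.

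With the choice $\delta_\Dset = |\Dset|^m \delta(|\Dset|)$ from Section~\ref{extsec}, the condition $\delta_\Dset \geq x^{-1}$ is, for $\alpha > 0$, equivalent to $|\Dset| \geq x^{-1/(m+\alpha)}$. For $\alpha = 0$ the threshold picks up a logarithmic factor, but the argument below goes through with only cosmetic changes. So $\psi(J,x)$ essentially counts bounded connected components of $E^c$ lying inside $J$ of length at least $x^{-1/(m+\alpha)}$.

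The geometric input comes from the IFS structure. A standard bounded-distortion estimate for $C^{1+\beta}$ contractions shows that every cylinder $F_\sigma(E)$ with $\sigma = (i_1,\ldots,i_k)$ has diameter comparable to a single number $\rho_\sigma \in (0,1)$ uniformly in $\sigma$ and $k$. Disjointness of the interiors of the convex hulls of the $F_i(E)$ then yields a constant $c_0 > 0$ such that inside the convex hull of each $F_\sigma(E)$ there is a gap between two subcylinders $F_\sigma F_i(E)$ and $F_\sigma F_j(E)$ of length at least $c_0 |F_\sigma(E)|$; this gap contains a bounded connected component of $E^c$ of the same order.

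Given $J$ with $|J|$ larger than a fixed threshold (for smaller $|J|$ the required inequality is trivial), choose a cylinder $F_{\sigma_0}(E)$ whose convex hull lies inside $J$ and has diameter comparable to $|J|$; the bounded-distortion estimate guarantees such $\sigma_0$ exists. Then inductively descend into a leftmost child at each level, producing a nested sequence of cylinders of geometrically decreasing diameter, and at each level collect the hole supplied by the previous paragraph. Continuing the descent as long as the current diameter exceeds $x^{-1/(m+\alpha)}$ yields at least a constant multiple of $\log(|J|\,x^{1/(m+\alpha)})$ distinct holes $\Dset$ with $\delta_\Dset \geq x^{-1}$, completing the verification. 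The main technical obstacle is tracking the bounded-distortion constants and the $\alpha = 0$ logarithmic correction uniformly across all $J$; once that is done, the application of Theorem~\ref{thm1} is immediate.
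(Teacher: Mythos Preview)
Your overall strategy is close to the paper's, but the reduction step contains a genuine error. You replace $\log\lambda(J)$ by $s\log|J|$ using $\lambda(J)\le A|J|^s$; this \emph{strengthens} the target to $\psi(J,x)\gtrsim \log\bigl(|J|\,x^{1/(m+\alpha)}\bigr)$, and that stronger inequality is false in general. For the middle-thirds Cantor set and $J=(1/3-\epsilon,\,2/3+\epsilon)$ one has $|J|\asymp 1$, yet every cylinder contained in $J$ has diameter $O(\epsilon)$; with $x^{-1/(m+\alpha)}\asymp\epsilon$ the number of admissible holes in $J$ is bounded while $\log\bigl(|J|\,x^{1/(m+\alpha)}\bigr)\asymp\log(1/\epsilon)\to\infty$. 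So your key claim---that bounded distortion guarantees a cylinder $F_{\sigma_0}(E)\subset J$ of diameter comparable to $|J|$---fails precisely for intervals that straddle a large gap of $E$. (Relatedly, ``$|J|$ larger than a fixed threshold'' is not the correct dichotomy: the inequality becomes trivial only when $|J|\,x^{1/(m+\alpha)}$ is bounded, an $x$-dependent condition.)

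The paper's Lemma~\ref{iterholelemma} avoids this by keeping $\lambda(J)$: among the maximal cylinders of $\lambda$-measure at most $\lambda(J)/3$, at least one must lie inside $J$ (the ``middle'' one), and by the Frostman bound its diameter is $\gtrsim\lambda(J)^{1/s}$. From that cylinder the images $F_\rho(\Dset_0)$ of a single fixed gap $\Dset_0\in D$ supply enough holes via bounded distortion. Your level-by-level descent would work just as well once the starting cylinder is chosen with diameter $\gtrsim\lambda(J)^{1/s}$ rather than $\gtrsim|J|$. Two smaller points: disjointness of the \emph{interiors} of the convex hulls does not by itself force a positive gap $c_0$ (cylinders may touch), so one needs $D\neq\emptyset$, which the paper assumes implicitly by fixing $\Dset_0\in D$; and the $\alpha=0$ case is handled not by a cosmetic adjustment but by inserting an auxiliary $\varepsilon>0$ in the exponent and letting $\varepsilon\to 0$ after applying Theorem~\ref{thm1}.
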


If $s = \dimh E$ then there is a probability measure $\lambda$ on $E$ such that
$\lambda(I) \leq A |I|^s$ for every interval $I$ (see
\cite[Theorem~5.3]{falconer97}), and thus
$\dimf f_\omega E \geq \dimh E / (m + \alpha)$ almost surely. By using
a particular set $C$ in the construction of $f_\omega$, similar results can be obtained
for general Borel sets.

Let $(c_k)_{k = 1}^\infty$ be an increasing sequence of positive numbers that
converges to $1 / 2$ and satisfies
	$$
	\prod_{k = 1}^\infty 2c_k > 0\quad
	\text{and}\quad
	\lim_{k \to \infty} \frac{\log(1 - 2c_k)}{k} = 0
	$$
(for example $c_k = 1 / 2 - 1 / k^2$). Let $C_0 = [0, 1]$ and for $k \geq 1$ let $C_k$
be the set obtained by removing from every connected component $I$ of $C_{k - 1}$ the
open interval that is concentric with $I$ and has length $(1 - 2 c_k)|I|$. (Thus $C_k$
consists of $2^k$ closed intervals of length $\prod_{i = 1}^k c_i$.)
Let
	$$
	C = \bigcap_{k = 0}^\infty C_k;
	$$
this is a compact set of positive Lebesgue measure.

\begin{theorem}	\label{thm2}
Take $E = C$ and let $F$ be a Borel subset of $\rea$ such that
$\hmeas^s(F) > 0$. Then there is some $t \in \rea$ such that almost surely
	$$
	\dimf f_\omega(C \cap (F + t)) \geq \frac{s}{m + \alpha}.
	$$
\end{theorem}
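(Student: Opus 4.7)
The plan is to reduce the statement to Theorem~\ref{thm1} applied to $E = C$ with a suitably chosen measure on $C \cap (F+t)$. Since $\hmeas^s(F) > 0$, Frostman's lemma provides a non-zero compactly supported Borel measure $\mu$ on $F$ with $\mu(I) \leq A|I|^s$ for every interval $I$. For $t \in \rea$ let $\mu_t$ denote the translate $\mu_t(B) = \mu(B - t)$, which is supported on $F+t$. Because $C$ has positive Lebesgue measure by the choice of the $c_k$, Fubini's theorem gives
$$\int_\rea \mu_t(C) \intd t = \mu(\rea) \cdot \leb(C) > 0,$$
so some $t$ satisfies $\mu_t(C) > 0$. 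Fix such a $t$ and set $\lambda = \restr{\mu_t}{C}/\mu_t(C)$; this is a probability measure on $C \cap (F+t)$ satisfying $\lambda(I) \leq A'|I|^s$ with $A' = A/\mu_t(C)$.

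The remaining task is to verify the hypothesis of Theorem~\ref{thm1} for $\lambda$ and the exponent $s/(m+\alpha)$. The bounded components of $C^c$ are organised in levels: at level $k$ there are $2^{k-1}$ holes of common length $\ell_k = (1-2c_k)\prod_{i < k} c_i$, and the assumptions on $(c_k)$ give $\ell_k = 2^{-k + o(k)}$, hence $\delta_\Dset = 2^{-(m+\alpha)k + o(k)}$ for any level-$k$ hole $\Dset$ (the $o(k)$ absorbs the logarithmic factor present when $\alpha = 0$). Thus the largest level $K_x$ at which holes still satisfy $\delta_\Dset \geq x^{-1}$ obeys $K_x \geq \log_2 x / (m+\alpha) - o(\log x)$. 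For any interval $J$ the number of level-$k$ holes contained in $J$ is at least $2^{k-1}|J| - 2$, and summing gives
$$\psi(J,x) \gtrsim 2^{K_x}|J| - K_x \geq x^{1/(m+\alpha) - \varepsilon}|J| - O(\log x)$$
for any $\varepsilon > 0$ and $x$ sufficiently large. Combined with $\log \lambda(J) \leq s\log|J| + \log A'$, a short case analysis on the size of $x^{1/(m+\alpha)}|J|$ gives the required bound $\psi(J,x) \geq a + b(\log \lambda(J) + (s/(m+\alpha))\log x)$: when this product is bounded above the right-hand side is bounded by a constant (absorbed into $a$), and when it is large the polynomial lower bound on $\psi$ dominates the logarithmic right-hand side. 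Theorem~\ref{thm1} then yields $\dimf f_\omega \lambda \geq s/(m+\alpha)$ almost surely, and since $\lambda$ is concentrated on $C \cap (F+t)$ the conclusion follows.

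The main obstacle is obtaining the counting estimate for $\psi$ with the correct exponent $1/(m+\alpha)$ rather than something strictly smaller. The factors $(1-2c_k)$ in $\ell_k$ could in principle corrupt this exponent, but the hypothesis $\log(1-2c_k)/k \to 0$ keeps the correction at $2^{o(k)}$, so that no exponent is lost; this is precisely why $C$ is constructed with varying $c_k \to 1/2$ rather than with a fixed contraction ratio bounded away from $1/2$. One must also make sure the implicit constants in the $\psi$ estimate are independent of $J$, which is straightforward from the uniform level structure of $C$.
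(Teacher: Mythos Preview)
Your first step---producing $t$ and the probability measure $\lambda$ on $C\cap(F+t)$ via Frostman's lemma together with $\int\mu_t(C)\intd t=\mu(\rea)\leb(C)>0$---is exactly what the paper does (Lemma~\ref{tlemma}), and your identification of $K_x\sim \log_2 x/(m+\alpha)$ is correct.

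The gap is in the counting step. The assertion that ``for any interval $J$ the number of level-$k$ holes contained in $J$ is at least $2^{k-1}|J|-2$'' is false, and the failure is not a matter of constants. Take $J$ to be the central level-$1$ hole $(c_1,1-c_1)$: it contains no level-$k$ holes at all, while $2^{k-1}|J|-2\to\infty$. Enlarging $J$ slightly so that it just grazes $C$ (and hence may have $\lambda(J)>0$) does not help: the number of level-$k$ holes in $J$ is then governed by the tiny overhang, not by $|J|$, so the ratio of the true count to $2^{k-1}|J|$ can be made arbitrarily small. Because your route to the hypothesis of Theorem~\ref{thm1} passes through $|J|$ (using only $\lambda(J)\le A'|J|^s$, which gives a \emph{lower} bound on $|J|$), this non-uniformity cannot be absorbed into the constants $a,b$; the sentence ``the implicit constants in the $\psi$ estimate are independent of $J$, which is straightforward from the uniform level structure of $C$'' is precisely where the argument breaks.

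The paper's Lemma~\ref{holelemma} circumvents this by never bounding $\psi$ in terms of $|J|$. Instead it locates the single largest $\Dset\in D$ meeting $J$, passes to a half $J'\subset J\setminus\Dset$ with $\lambda(J')\ge\lambda(J)/2$, and uses $|J'|\ge(\lambda(J)/2A)^{1/s}$ together with the fact that $J'$ lies entirely to one side of $\Dset$ to force $J'$ to contain a full component of $C_k$ for an explicit $k$ depending on $\lambda(J)$. Counting holes inside that single component gives $\psi(J,x)\ge 2^{n-k}$ with $n$ determined by $x$, and this yields the required inequality with the $\theta(x)\to1$ correction. The essential idea you are missing is this preliminary excision of the dominant gap; once that is done, the combinatorics become genuinely uniform in $J$.
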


\section{Proof of Theorem~\ref{thm1}}
The following lemma says that an almost sure bound for the Fourier dimension of a random measure can be
proved by estimating the decay rate of $\E |\widehat\mu(\xi)|^{2q}$ for large $q$. This idea
was used in the works of Salem, Kahane and Bluhm mentioned in the introduction.
The proof of Theorem~\ref{thm1} is also much inspired by Kahane's and Bluhm's proofs.

\begin{lemma} \label{kahanelemma}
Let $(\Omega, P)$ be a probability space and let $\Omega \ni \omega \mapsto \mu_\omega$
be a random probability measure on $\rea$ such that $|\supp \mu_\omega| < M$
almost surely, for some constant $M$. Suppose that
	$$
	\E\left( |\widehat\mu_\omega(\xi)|^{2q} \right)
	\lesssim |\xi|^{-sq + 1}
	$$
for $q = 1, 2, \ldots$. Then almost surely
	$$
	\widehat\mu_\omega(\xi) \lesssim |\xi|^{-s/2 + \varepsilon}
	$$
for every $\varepsilon > 0$.

\begin{proof}
The assumption on the decay of
$\E |\widehat\mu_\omega(\xi)|^{2q}$ implies that
	\begin{align*}
	\int \sum_{\xi \in \ints / M} |\xi|^{sq - 3}
	| \widehat\mu_\omega(\xi) |^{2q} \intd{P}(\omega)
	&=
	\sum_{\xi \in \ints / M} |\xi|^{sq - 3} \E |\widehat\mu_\omega(\xi)|^{2q} \\
	&\leq \text{const.} \times \sum_{\xi \in \ints / M} |\xi|^{-2}
	< \infty.
	\end{align*}
Thus for a.e.~$\omega$ the sum in the first expression is finite,
so
	$$
	\lim_{\substack{|\xi| \to \infty \\ \xi \in \ints / M}}
	|\xi|^{sq - 3} | \widehat\mu_\omega(\xi) |^{2q} = 0,
	$$
and in particular
	$$
	|\widehat\mu(\xi)| \lesssim |\xi|^{-s / 2 + 3 / (2q)}, \qquad
	\xi \in \ints / M.
	$$
It follows from a lemma of Kahane \cite[p.~252]{kahane85} that
$\widehat\mu_\omega(\xi) \lesssim |\xi|^{-s / 2 + 3 / (2q)}$ for $\xi \in \rea$ as well.
Letting $q \to \infty$ establishes the
conclusion for any fixed $\varepsilon > 0$, and letting 
$\varepsilon \to 0$ along a countable set then proves the lemma.
\end{proof}
\end{lemma}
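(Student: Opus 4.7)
The plan is to follow the Salem--Kahane template: exploit the moment bound only on the countable lattice $\ints / M$ of frequencies, where Fubini does the work, and then upgrade the resulting almost-sure bound from $\ints / M$ to all of $\rea$ using that $\supp \mu_\omega$ has diameter less than $M$, which forces $\widehat{\mu}_\omega$ to be slowly varying on scales comparable to $1/M$.

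Concretely, I would fix a positive integer $q$ and consider
$$
\int_\Omega \sum_{\xi \in \ints/M,\, \xi \neq 0} |\xi|^{sq - 3} |\widehat{\mu}_\omega(\xi)|^{2q} \intd{P}(\omega).
$$
Swapping summation and integration by Fubini and applying the hypothesis bounds this by a constant multiple of $\sum_{\xi \in \ints/M} |\xi|^{-2}$, which is finite. Hence for almost every $\omega$ the inner sum is finite, so its terms tend to $0$ along $\ints/M$, yielding the pointwise bound $|\widehat{\mu}_\omega(\xi)| \lesssim |\xi|^{-s/2 + 3/(2q)}$ at the lattice points, with an $\omega$-dependent implicit constant.

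To extend the bound from $\ints/M$ to $\rea$ I would invoke Kahane's interpolation lemma \cite{kahane85}: since $\mu_\omega$ is supported in an interval of length less than $M$, a polynomial decay bound on $\ints/M$ upgrades to the same bound on $\rea$ up to a universal multiplicative constant. Finally, I would intersect the full-measure events over $q = 1, 2, \ldots$ to obtain a single full-measure set on which the bound holds for every $q$; given $\varepsilon > 0$, choosing $q$ with $3/(2q) < \varepsilon$ then gives $\widehat{\mu}_\omega(\xi) \lesssim |\xi|^{-s/2 + \varepsilon}$.

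The main obstacle is the second step: passing from the lattice to all of $\rea$ genuinely uses the compact support hypothesis and is not a formality, because the implicit constants in the lattice bound depend on $\omega$ in a nontrivial way. The first and third steps are routine applications of Fubini and a countable intersection of null sets, respectively.
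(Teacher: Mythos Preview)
Your proposal is correct and matches the paper's own proof essentially line for line: Fubini on the lattice $\ints/M$, extraction of the pointwise lattice bound from convergence of the series, Kahane's lemma to pass from $\ints/M$ to $\rea$, and a countable intersection over $q$ to finish. There is no substantive difference to comment on.
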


\begin{proof}[Proof of Theorem~\ref{thm1}]
Let $\mu_\omega = f_\omega \lambda$. By Lemma~\ref{kahanelemma} it suffices to show that
	$$
	\E \left(|\widehat\mu_\omega(\xi)|^{2q} \right) \lesssim |\xi|^{-sq + 1} \qquad
	\text{for } q = 1, 2, \ldots.
	$$
Now,
	\begin{align*}
	|\widehat\mu_\omega(\xi)|^{2q} &= \left(
	\int \e(\xi(y - x)) \intd{\mu_\omega^2}(x, y) \right)^q \\
	&=
	\left( \int \e(\xi(f_\omega(y) - f_\omega(x))) \intd{\lambda^2}(x, y) \right)^q \\
	&=
	\int \e\left(\xi\sum_{i = 1}^q \left( f_\omega(y_i) - f_\omega(x_i) \right)\right)
	\intd{\lambda^{2q}}(\myvec x, \myvec y),
	\end{align*}
where $\myvec x = (x_1, \ldots, x_q)$ and $\myvec y = (y_1, \ldots, y_q)$. Let
	$$
	d_{\myvec x, \myvec y} =
	\sum_{i = 1}^q \left( y_i - x_i \right)
	$$
and
	$$
	h_{\myvec x, \myvec y}(z) = \#\{i; \, z < y_i \} - \#\{i; \, z < x_i \}.
	$$
If $(\myvec x, \myvec y) \in E^{2q}$ then $h_{\myvec x, \myvec y}$ is constant on
each $\Dset$ in $D$, and
	$$
	\sum_{i = 1}^q \left( f_\omega(y_i) - f_\omega(x_i) \right)
	=
	d_{\myvec x, \myvec y} + \sum_{\Dset \in D}
	h_{\myvec x, \myvec y}(\Dset) \omega_\Dset.
	$$
Thus
	$$
	|\widehat\mu_\omega(\xi)|^{2q} =
	\int \e\left( \xi d_{\myvec x, \myvec y} \right) \e\left(\xi\sum_{\Dset \in D}
	h_{\myvec x, \myvec y}(\Dset) \omega_\Dset \right)
	\intd{\lambda^{2q}}(\myvec x, \myvec y),
	$$
and integrating over $\omega$ gives
	\begin{align*}
	\E |\widehat\mu_\omega(\xi)|^{2q} &\leq
	\int \left| \int \prod_{\Dset \in D} \e(h_{\myvec x, \myvec y}(\Dset) \xi \omega_\Dset)
	\intd{P}(\omega)
	\right| \intd{\lambda^{2q}}(\myvec x, \myvec y) \\
	&=
	\int \left| \prod_{\Dset \in D} \int \e(h_{\myvec x, \myvec y}(\Dset) \xi \omega_\Dset)
	\intd{\nu_{\Dset}}(\omega_\Dset) \right| \intd{\lambda^{2q}}(\myvec x, \myvec y) \\
	&=
	\int \prod_{\Dset \in D} |\widehat\nu(h_{\myvec x, \myvec y}(\Dset) \delta_\Dset \xi)|
	\intd{\lambda^{2q}}(\myvec x, \myvec y).
	\end{align*}

Let $B_r$ be the set of $(\myvec x, \myvec y)$ such that $\lambda(J) \leq r$ whenever $J$
is an interval and $h_{\myvec x, \myvec y}$ is non-zero everywhere on $J$.
If $(\myvec x, \myvec y) \in B_r$ then for each $x_i$ there must be some $y_j$ such that
the open interval between $x_i$ and $y_j$ has $\lambda$-measure less than or equal to
$r$, since either $x_i = y_j$ for some $j$ or $h_{\myvec x, \myvec y}$ increases by $1$
at $x_i$. Thus for each fixed $\myvec y$ there is a set of $\lambda$-measure $2qr$ that
contains all the $x_i$:s whenever $(\myvec x, \myvec y) \in B_r$. Hence
	$$
	\lambda^{2q}(B_r) =
	\int \lambda^q
	\left(\{ \myvec x; \, (\myvec x, \myvec y) \in B_r \}\right)
	\intd{\lambda^q}(\myvec y)
	\leq (2qr)^q.
	$$
If on the other hand $(\myvec x, \myvec y) \notin B_r$ then there is an interval $J$ such
that $\lambda(J) \geq r$ and $h_{\myvec x, \myvec y} \neq 0$ on $J$. Then for any $K > 0$ and
for $|\xi| \geq Kx_0$,
	\begin{align*}
	\prod_{\Dset \in D} |\widehat\nu(h_{\myvec x, \myvec y}(\Dset) \delta_\Dset \xi)|
	&\leq
	\prod_{\substack{\Dset \in D \\ \Dset \subset J}} g(\delta_\Dset |\xi|)
	\leq
	g(K)^{\psi\left(J, K^{-1}|\xi|\right)} \\
	&\leq
	g(K)^{a + b \left(\log r + s \log\left(K^{-1}|\xi|\right)\right)},
	\end{align*}
where
	$$
	g(x) = \sup_{|\xi| \geq x} |\widehat\nu(\xi)|.
	$$

Thus for any positive $r$ and $K$, and $|\xi| \geq K x_0$,
	$$
	\E |\widehat\mu_\omega(\xi)|^{2q} \leq
	(2qr)^q +
	g(K)^{a + b \left(\log r + s \log\left(K^{-1}|\xi|\right)\right)}.
	$$
In particular this holds if $r$ is chosen such that
	$$
	g(K)^{a + b \left(\log r + s \log\left(K^{-1}|\xi|\right)\right)}
	= r^q,
	$$
or equivalently,
	$$
	\log r = -s \log |\xi| \left( \frac{a}{bs\log|\xi|} + 
	\left(1 - \frac{\log K}{\log |\xi|} \right)\right)
	\frac{b \log g(K)}{b \log g(K) - q}.
	$$
For a fixed $K$ the factor in the middle converges to $1$ when $|\xi| \to \infty$,
and hence
	$$
	\E |\widehat\mu_\omega(\xi)|^{2q} \lesssim
	|\xi|^{-sq \frac{b \log g(K)}{b \log g(K) - q} + \frac{1}{2}}
	$$
for any $K$. Taking $K$ large enough shows that
$\E |\widehat\mu_\omega(\xi)|^{2q} \lesssim |\xi|^{-sq + 1}$.
\end{proof}

\section{Lemma~\ref{gensmoothlemma} and the proof of Theorem~\ref{equicontthm}}
The following lemma is used in the proof of Theorem~\ref{equicontthm}
at the end of this section, and also in the proof of Theorem~\ref{endthm1}.

\begin{lemma}	\label{gensmoothlemma}
Let $\{ V_k \}_{k = 1}^\infty$ be disjoint open intervals such that $V = \bigcup_k V_k$
is bounded and let $\{g_k\}_{k = 1}^\infty$ be increasing functions $\rea \to \rea$ that are $m$ times
differentiable ($m \geq 1$), such that $\{ g_k^{(m)} \}$ is uniformly equicontinuous
with modulus $\varepsilon$ and
	\begin{equation} \label{zerocondeq}
	g_k(\inf V_k) = 0
	\quad \text{and} \quad
	g_k' = \ldots = g_k^{(m)} = 0 \text{ on } V_k^c
	\end{equation}
for all $k$. Define $g: \rea \to \rea$ by
	$$
	g(x) = \sum_{k = 1}^\infty g_k(x).
	$$
Then $g$ is $m$ times continuously differentiable with
	$$
	g(\inf V) = 0
	\quad \text{and} \quad
	g' = \ldots = g^{(m)} = 0 \text{ on } V^c,
	$$
and $g^{(m)}$ is uniformly continuous with modulus $2\varepsilon$.

\begin{proof}
It follows from \eqref{zerocondeq} that
	\begin{equation} \label{distboundeq}
	\left| g_k^{(m)}(x) \right| \leq \varepsilon\big(\dist(x, V^c)\big)
	\end{equation}
and
	\begin{equation} \label{gkboundeq}
	g_k(\inf V_k + t) \leq \frac{\varepsilon(t)}{m!} t^m
	\qquad\text{for } t \geq 0
	\end{equation}
for all $k$. In particular,
	$$
	g(x) \leq \sum_k | g_k(V_k) | \leq
	\frac{\varepsilon\big(\sup_k |V_k|\big)}{m!} \sum_k |V_k|^m < \infty,
	$$
so that $g$ is well-defined.

It is clear that $g^{(m)}$ exists and is continuous on $V$,
and by \eqref{distboundeq},
	$$
	\lim_{\substack{x \to V^c \\ x \in V}} g^{(m)}(x) = 0.
	$$
To prove that $g$ is $m$ times continuously differentiable it
will be shown that $g^{(m)}$ exists and is $0$ on $V^c$. For
this it suffices to consider limits from the right, since
$x \mapsto -g(-x) + |g(V)|$ has the same form as $g$, with
$x \mapsto -g_k(-x) + |g_k(V_k)|$ instead of $g_k$. So
take any $x \in V^c$ and any $h \geq 0$. Then
	$$
	g(x + h) - g(h) = \sum_{V_k \subset (x, x + h)} |g_k(V_k)|
	+ \sum_{x + h \in V_n} g_n(x + h)
	$$
(the second sum has one term if $x + h \in V$ and is empty otherwise).
By \eqref{gkboundeq},
	\begin{align*}
	\sum_{V_k \subset (x, x + h)} |g_k(V_k)| &\leq
	\frac{\varepsilon(h)}{m!} \sum_{V_k \subset (x, x + h)} |V_k|^m\\
	&\leq
	\frac{\varepsilon(h)}{m!} \left(\sum_{V_k \subset (x, x + h)} |V_k|\right)^m
	\leq
	\frac{\varepsilon(h)}{m!} h^m
	\end{align*}
and
	$$
	\sum_{x + h \in V_n} g(x + h) \leq \frac{\varepsilon(h)}{m!} h^m,
	$$
and thus
	$$
	g(x + h) - g(x) \leq \frac{2 \varepsilon(h)}{m!} h^m.
	$$
Hence $g$ is $m$ times differentiable at $x$, and
	$$
	g'(x) = \ldots = g^{(m)}(x) = 0.
	$$

Finally, it will be shown that $g^{(m)}$ is uniformly continuous with
modulus $2 \varepsilon$.
If there is some $n$ such that $x$ and $y$ both lie in $\closure{V}_n$ then
	$$
	\left| g^{(m)}(y) - g^{(m)}(x) \right|
	=
	\left| g_n^{(m)}(y) - g_n^{(m)}(x) \right| \leq
	\varepsilon(| y - x |).
	$$
Otherwise the open interval between $x$ and $y$ intersects $V^c$,
so
	\begin{align*}
	\left| g^{(m)}(y) - g^{(m)}(x) \right|
	&\leq
	\left| g^{(m)}(y) \right| + \left| g^{(m)}(x) \right| \\
	&\leq \varepsilon(\dist(y, V^c)) + \varepsilon(\dist(x, V^c)))
	\leq 2 \varepsilon(|y - x|). \qedhere
	\end{align*}
\end{proof}
\end{lemma}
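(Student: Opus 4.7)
The plan is to extract two pointwise estimates from the hypotheses, then use them in turn to handle summability, $C^m$-smoothness on $V$, $C^m$-smoothness on $V^c$ (with all derivatives vanishing there), and finally the modulus of continuity of $g^{(m)}$.

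For the preliminary estimates, since $g_k' = \ldots = g_k^{(m)} \equiv 0$ on $V_k^c$ and $g_k^{(m)}$ has modulus $\varepsilon$, the inequality $|g_k^{(m)}(x)| \leq \varepsilon(\dist(x, V_k^c)) \leq \varepsilon(\dist(x, V^c))$ is immediate. Applying Taylor's formula $m$ times at $\inf V_k \in V_k^c$, where $g_k$ and all its derivatives through order $m$ vanish, gives $g_k(\inf V_k + t) \leq \varepsilon(t)\, t^m/m!$ for $t \geq 0$. Since $V$ is bounded, $\sum_k |V_k| < \infty$, so $g = \sum_k g_k$ is bounded by $(\varepsilon(\sup_k |V_k|)/m!) \sum_k |V_k|^m$ and the series converges absolutely and uniformly. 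Thus $g$ is continuous, is constant on each component of $V^c$, and the condition $g_k(\inf V_k) = 0$ for every $k$ yields $g(\inf V) = 0$.

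For differentiability, I first observe that on each $V_n$ the disjointness of the $V_k$ collapses the sum to $g^{(j)} = g_n^{(j)}$ for $1 \leq j \leq m$, so $g \in C^m(V)$. The pointwise bound on $|g_k^{(m)}|$, together with Taylor expansions of $g_n^{(j)}$ from either endpoint of $V_n$ for $1 \leq j \leq m - 1$, implies that $g^{(j)}(y) \to 0$ as $y \in V$ approaches $V^c$, so each $g^{(j)}$ extends continuously to $\rea$ by setting $g^{(j)} \equiv 0$ on $V^c$. To verify that this extension really is the $j$-th derivative at $x \in V^c$, I would decompose $g(x+h) - g(x)$ for $h > 0$ into contributions from intervals $V_k \subset (x, x+h)$, each of size $g_k(V_k)$, plus at most one contribution from a single $V_n$ containing $x + h$. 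The Taylor bound, the estimate $\sum_{V_k \subset (x, x+h)} |V_k|^m \leq (\sum_{V_k \subset (x, x+h)} |V_k|)^m \leq h^m$, and the analogous bound on $g_n(x + h)$ together give $0 \leq g(x+h) - g(x) \leq 2\varepsilon(h)\, h^m / m!$. Symmetry under $x \mapsto -g(-x) + g(V)$ provides the matching estimate for $h < 0$, and the resulting $o(h^m)$ bound combined with the continuous extensions forces $g'(x) = \ldots = g^{(m)}(x) = 0$ at every $x \in V^c$.

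For the modulus of continuity of $g^{(m)}$, there are two cases. If $x$ and $y$ both lie in some $\closure{V_n}$, then $|g^{(m)}(y) - g^{(m)}(x)| = |g_n^{(m)}(y) - g_n^{(m)}(x)| \leq \varepsilon(|y - x|)$ by equicontinuity. Otherwise the open interval between $x$ and $y$ meets $V^c$, so each of $|g^{(m)}(x)|$ and $|g^{(m)}(y)|$ is bounded by $\varepsilon$ of a quantity no larger than $|y - x|$, yielding $|g^{(m)}(y) - g^{(m)}(x)| \leq 2\varepsilon(|y - x|)$ by the triangle inequality. The main obstacle I expect is making precise the passage from the $o(h^m)$ estimate on $g(x+h) - g(x)$ at $x \in V^c$ to the vanishing of \emph{every} derivative through order $m$ at $x$, not merely $g^{(m)}(x)$; this requires constructing the continuous extensions of $g^{(1)}, \ldots, g^{(m-1)}$ in parallel and reconciling them with the fundamental theorem of calculus across the boundary of $V$.
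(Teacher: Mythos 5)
Your proposal is correct and follows essentially the same route as the paper: the same two preliminary estimates $|g_k^{(m)}(x)| \leq \varepsilon(\dist(x,V^c))$ and $g_k(\inf V_k + t) \leq \varepsilon(t)t^m/m!$, the same decomposition of $g(x+h)-g(x)$ over the intervals $V_k \subset (x,x+h)$ plus one boundary term, the same reflection trick for left-hand limits, and the same two-case argument for the modulus $2\varepsilon$. The ``obstacle'' you flag at the end --- upgrading the $o(h^m)$ bound on $g(x+h)-g(x)$ to the vanishing of all derivatives through order $m$ at points of $V^c$ --- is real but routine (an induction using the Taylor bounds $|g_n^{(j)}(y)| \leq \varepsilon(\dist(y,V_n^c))\dist(y,V_n^c)^{m-j}/(m-j)!$ for $1 \leq j \leq m-1$), and the paper's own proof passes over it just as quickly as you do.
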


\begin{proof}[Proof of Theorem~\ref{equicontthm}]
Note that $\delta(t) / t$ is decreasing for $t > 0$. Let
	$$
	g_\Dset(x) = \omega_\Dset
	\varphi\left( \frac{x - \inf \Dset}{|\Dset|} \right).
	$$
Then
	$$
	g_\Dset^{(k)} = \frac{\omega_\Dset}{|\Dset|^k}
	\varphi^{(k)}\left( \frac{x - \inf \Dset}{|\Dset|} \right)
	$$
for all $k$, and thus for all $x, y \in \closure{\Dset}$,
	\begin{align*}
	\left| g_\Dset^{(m)}(y) - g_\Dset^{(m)}(x) \right|
	&\leq
	\| \varphi^{(m + 1)} \|_\infty
	\frac{\delta_\Dset}{|\Dset|^{m + 1}} |y - x|
	=
	\| \varphi^{(m + 1)} \|_\infty
	\frac{\delta(|\Dset|)}{|\Dset|} |y - x|\\
	&\leq
	\| \varphi^{(m + 1)} \|_\infty
	\frac{\delta(|y - x|)}{|y - x|} |y - x|
	=
	\| \varphi^{(m + 1)} \|_\infty \delta(|y - x|).
	\end{align*}
Since $g_\Dset^{(m)}$ is constant on $\Dset^c$ the inequality
	$$
	\left| g_\Dset^{(m)}(y) - g_\Dset^{(m)}(x) \right|
	\leq
	\| \varphi^{(m + 1)} \|_\infty \delta(|y - x|)
	$$
then holds for all $x, y \in \rea$, and the conclusion follows by
Lemma~\ref{gensmoothlemma} since
	\begin{align*}
	f_\omega(x) &= x + \sum_{\Dset \in D} g_\Dset(x). \qedhere
	\end{align*}
\end{proof}

\section{Proof of Theorem~\ref{iterthm}}
Recall that $\Phi = \{F_1, \ldots, F_N\}$ is an
iterated function system where each $F_i$ is a contracting $C^{1 + \beta}$
diffeomorphism, that $E$ is the attractor of $\Phi$ and that
the interiors of the convex hulls of $F_i(E)$ and $F_j(E)$ are assumed to be 
disjoint when $i \neq j$.
If $\rho = \rho_1 \ldots \rho_n$ is a finite string over $\{1, \ldots, N \}$, let
$F_\rho = F_{\rho_1} \circ \ldots \circ F_{\rho_n}$ and $E_\rho = F_\rho(E)$.
By the principle of bounded distortion there is a
constant $B$ such that
	$$
	B^{-1}|E_\rho||x - y| \leq |F_\rho(x) - F_\rho(y)| \leq B|E_\rho||x - y|
	$$
for every $\rho$ and every $x, y$ in a compact ball that includes $E$
(this is proved in \cite[Proposition~4.2]{falconer97} for iterated
function systems consisting of contracting $C^2$-diffeomorphisms, and the proof
goes through with a small modification for $C^{1 + \beta}$-contractions).
In particular, there is some $\gamma \in (0, 1)$ such that
	$$
	\gamma |x - y| \leq |F_i(x) - F_i(y)|
	$$
for every $i$ and every $x, y$ in that ball.

\begin{lemma} \label{iterholelemma}
Let $\lambda$ be a probability measure on $E$ such that
	$$
	\lambda(I) \leq A|I|^s
	$$
for every interval $I$. Then there are constants $a$ and $b > 0$
such that
	$$
	\#\left\{ \Dset \in D; \, \Dset \subset J \text{ and } |\Dset| \geq x^{-1}\right\}
	\geq a + b \left( \log \lambda(J) + s \log x \right)
	$$
for every interval $J$ and every $x > 0$.

\begin{proof}
For $y > 0$, let
	$$
	\eta(y) = \# \left\{\Dset \in D; \, |\Dset| \geq y^{-1} \right\},
	$$
and fix some $\Dset_0 \in D$. Given $y$, let $n$
be the unique integer such that
	$$
	\gamma^{n + 1}|\Dset_0| < y^{-1} \leq \gamma^n|\Dset_0|.
	$$
If $n \geq 0$ then $|F_\rho(\Dset_0)| \geq y^{-1}$ for each $\rho$ of length $n$ and
there are $N^n$ such $\rho$:s, so $\eta(y) \geq N^n \geq n \log N$. If $n < 0$,
it is still true that $\eta(y) \geq n \log N$. Thus
	$$
	\eta(y) \geq n \log N
	\geq \left( \frac{\log y + \log |\Dset_0|}{-\log \gamma} - 1 \right) \log N.
	$$

Now assume that $s > 0$, since otherwise the conclusion of the lemma
holds trivially. Take an interval $J$ with positive $\lambda$-measure and some positive $x$.
Consider the sets of the form $E_\rho$ that are maximal (with respect to inclusion) subject
to the condition of having $\lambda$-measure less than or equal to $\lambda(J) / 3$.
Their union is $E$ and the intersection of two different such sets
contains at most one point. Therefore $J$ intersects at least three of them,
and must include at least one (the one in the middle). Thus there is some
$\rho$ such that $E_{\rho} \subset J$ and
	$$
	|E_{\rho}| \geq \gamma |E_{\rho'}| \geq
	\gamma \left( \frac{\lambda(E_{\rho'})}{A} \right)^{1 / s} \geq
	\gamma \left( \frac{\lambda(J)}{3A} \right)^{1 / s},
	$$
where $\rho'$ is the longest proper prefix of $\rho$. If
$\Dset \in D$ then $F_\rho(\Dset) \in D$ and $F_\rho(\Dset) \subset \conv E_\rho$
and $|F_\rho(\Dset)| \geq B^{-1} |E_\rho||\Dset|$, so it follows that
	\begin{align*}
	\#&\big\{ \Dset \in D; \, \Dset \subset J \text{ and } |\Dset| \geq x^{-1}\big\}
	\geq \# \left\{ \Dset \in D; \, \Dset \subset \conv E_\rho \text{ and }
	|\Dset| \geq x^{-1} \right\} \\
	&\geq
	\eta\left( B^{-1} |E_\rho| x \right)
	\geq
	\eta\left(B^{-1} \gamma (3 A)^{-1 / s} \lambda(J)^{1 / s} x\right) \\
	&\geq
	\left(\frac{\log\left(B^{-1} \gamma (3 A)^{-1 / s}\right) +
	\log |\Dset_0|}{-\log \gamma} - 1\right) \log N
	+\frac{\log N}{- s \log \gamma} \left( \log \lambda(J) + s\log x \right).
	\end{align*}
\end{proof}
\end{lemma}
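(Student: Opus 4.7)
The plan is to reduce the interval count to a global count on $E$, by finding inside $J$ a cylinder $E_\rho$ whose diameter is controlled from below by a power of $\lambda(J)$, and then transporting every hole of $E$ into $J$ via $F_\rho$. As a first step I would set $\eta(y) = \#\{\Dset \in D; \, |\Dset| \geq y^{-1}\}$, fix any $\Dset_0 \in D$, and note that for a word $\rho$ of length $n$ the image $F_\rho(\Dset_0)$ is a distinct bounded component of $E^c$ of length at least $\gamma^n |\Dset_0|$, by the uniform lower Lipschitz bound. Taking $n$ of order $(\log y + \log |\Dset_0|)/(-\log\gamma)$ yields $\eta(y) \geq N^n \geq n \log N$, hence $\eta(y) \geq a_0 + b_0 \log y$ for explicit constants $a_0$ and $b_0 > 0$.

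The core step is the choice of $E_\rho$. Assume $s > 0$ and $\lambda(J) > 0$. I would consider the maximal cylinders $E_\rho$ with $\lambda(E_\rho) \leq \lambda(J)/3$; their union is $E$ and distinct ones share at most one point (by the disjoint-interior hypothesis), so $J$ must meet at least three of them and therefore contain at least one of them entirely (the ``middle'' one in the natural order on the line). Call this cylinder $E_\rho$ and let $\rho'$ be its parent. Maximality forces $\lambda(E_{\rho'}) > \lambda(J)/3$, and combined with the hypothesis $\lambda(E_{\rho'}) \leq A|E_{\rho'}|^s$ and $|E_\rho| \geq \gamma|E_{\rho'}|$ this gives
$$
|E_\rho| \geq \gamma \left(\frac{\lambda(J)}{3A}\right)^{1/s}.
$$

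To finish I would transport the global count into $J$. The disjoint-interior-of-convex-hulls assumption together with $E_\rho \subset J$ imply that for every $\Dset \in D$ the image $F_\rho(\Dset)$ is again in $D$ and sits inside $\conv E_\rho \subset J$, while bounded distortion gives $|F_\rho(\Dset)| \geq B^{-1}|E_\rho||\Dset|$. Hence the number of holes of $J$ of length at least $x^{-1}$ is bounded below by $\eta(B^{-1}|E_\rho|x)$; substituting the lower bound on $|E_\rho|$ from the previous step and expanding the logarithm produces a term of the shape $(b_0/s)\log\lambda(J) + b_0 \log x$ plus a constant, which factors as $(b_0/s)(\log\lambda(J) + s\log x)$, giving the lemma with $b = b_0/s$. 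The degenerate case $s = 0$ is immediate, since $\log\lambda(J) \leq 0$ means a sufficiently negative $a$ absorbs the right-hand side.

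The only step that is not routine IFS bookkeeping is the middle one, and the main obstacle I expect is precisely the geometric argument that produces a fully contained cylinder $E_\rho \subset J$ whose diameter is controlled from below by $\lambda(J)^{1/s}$. The idea of cutting the word tree at the scale where the $\lambda$-mass first drops below $\lambda(J)/3$ and using that $J$ must contain at least one of at least three members it meets is the single genuinely geometric input; everything else is distortion and iteration.
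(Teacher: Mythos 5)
Your proposal is correct and follows essentially the same route as the paper's proof: the same global count $\eta(y)$ via images of a fixed hole $\Dset_0$ under length-$n$ words, the same selection of a maximal cylinder with $\lambda$-mass at most $\lambda(J)/3$ (three met, middle one contained), and the same transport of holes into $\conv E_\rho$ by bounded distortion. No gaps; the step you flag as the only non-routine one is exactly the paper's argument.
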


\begin{proof}[Proof of Theorem~\ref{iterthm}]
Take any $\varepsilon > 0$. Using Lemma~\ref{iterholelemma} at the last
step, there are constants $a$ and $b > 0$ and some $x_0$ such that
if $x \geq x_0$ then
	\begin{align*}
	\psi(J, x) &=
	\#\left\{\Dset \in D; \, \Dset \subset J \text{ and }
	|\Dset|^m \delta(|\Dset|) \geq x^{-1} \right\} \\
	&\geq
	\#\left\{\Dset \in D; \, \Dset \subset J \text{ and }
	|\Dset|^{m + \alpha + \varepsilon} \geq x^{-1} \right\} \\
	&=
	\#\left\{\Dset \in D; \, \Dset \subset J \text{ and }
	|\Dset| \geq x^{-1 / (m + \alpha + \varepsilon)} \right\} \\
	&\geq
	a + b \left( \log \lambda(J) + \frac{s}{m + \alpha + \varepsilon} \log x \right),
	\end{align*}
and thus $\dimf f_\omega \lambda \geq s / (m + \alpha + \varepsilon)$ almost
surely by Theorem~\ref{thm1}. Letting $\varepsilon \to 0$ along a countable
set proves the theorem.
\end{proof}

\section{Proof of Theorem~\ref{thm2} and Theorem~\ref{endthm1}} \label{lastsection}
\begin{lemma}	\label{tlemma}
Let $F$ be a Borel subset of \rea such that $\hmeas^s(F) > 0$.
Then there exists some $t \in \rea$ and a probability measure $\lambda$
on $C \cap (F + t)$ such that
	$$
	\lambda(I) \leq A |I|^s
	$$
for some constant $A$ and every interval $I$.

\begin{proof}
Since $\hmeas^s(F) > 0$, there is a probability measure $\mu$ on $F$ such that
	$$
	\mu(I) \leq A_0 |I|^s
	$$
for some constant $A_0$ and every Borel set $I$ (see
\cite[Proposition~4.11 and Corollary~4.12]{falconer90}, and also
\cite[Theorem~48]{rogers70}). Then, using
that Lebesgue measure is invariant under translation,
	\begin{align*}
	0 < \mu(\rea) \leb(C)
	&= \iint \chi_C(t) \intd{t} \intd{\mu}(x)
	= \iint \chi_C(x + t) \intd{t} \intd{\mu}(x)\\
	&= \iint \chi_C(x + t) \intd{\mu}(x) \intd{t}
	= \int \mu_t(C) \intd{t},
	\end{align*}
where $\mu_t$ denotes the translation of $\mu$ by $t$. It follows
that there is some $t$ such that $\mu_t(C) > 0$. For that $t$, let
	$$
	\lambda = \frac{\restr{\mu_t}{C}}{\mu_t(C)}.
	$$
Then $\lambda$ is a probability measure on $C \cap (F + t)$ and
	$$
	\lambda(I) \leq \frac{A_0}{\mu_t(C)} |I|^s
	$$
for every interval $I$.
\end{proof}
\end{lemma}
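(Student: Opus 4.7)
The plan is to combine Frostman's lemma with a translation averaging argument, exploiting the fact that the Cantor set $C$ has positive Lebesgue measure by construction. First, since $\hmeas^s(F) > 0$, a standard application of Frostman's lemma produces a probability measure $\mu$ supported on $F$ such that $\mu(I) \leq A_0 |I|^s$ for every Borel set $I$ and some constant $A_0$.

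Next, for each $t \in \rea$, let $\mu_t$ denote the translate of $\mu$ by $t$, so that $\mu_t(B) = \mu(B - t)$ for every Borel set $B$. A Fubini calculation combined with translation invariance of Lebesgue measure would then yield
\[
\int \mu_t(C) \intd{t}
= \int \int \chi_C(x + t) \intd{t} \intd{\mu}(x)
= \int \leb(C) \intd{\mu}(x)
= \leb(C),
\]
which is positive by construction of $C$. Consequently $\mu_t(C) > 0$ for some $t \in \rea$, in fact for a set of $t$ of positive Lebesgue measure.

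Finally, for such a $t$, I would set $\lambda = \restr{\mu_t}{C}/\mu_t(C)$. This is a probability measure carried by $C \cap \supp \mu_t \subset C \cap (F + t)$, and the Frostman bound for $\mu$ transfers to $\mu_t$ by translation invariance, giving $\lambda(I) \leq \mu_t(I)/\mu_t(C) = \mu(I - t)/\mu_t(C) \leq (A_0/\mu_t(C))\,|I|^s$ for every interval $I$, so one may take $A = A_0/\mu_t(C)$. There is no real technical obstacle beyond the averaging idea itself; the whole argument hinges on having $\leb(C) > 0$, which is precisely the property for which the fat Cantor set $C$ was engineered in the definition preceding Theorem~\ref{thm2}.
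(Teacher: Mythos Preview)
Your proof is correct and follows essentially the same route as the paper: Frostman's lemma to obtain $\mu$, the Fubini/translation-invariance averaging to find $t$ with $\mu_t(C)>0$, and then normalising $\restr{\mu_t}{C}$ to get $\lambda$ with $A = A_0/\mu_t(C)$. The only cosmetic difference is that you phrase the conclusion via $\supp\mu_t$, whereas the paper simply uses that $\mu_t$ gives full mass to $F+t$; for non-closed $F$ the latter is the cleaner statement, but the argument is the same.
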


\begin{lemma} \label{holelemma}
Take $E = C$ and let $\lambda$ be a probability measure on $C$ such that
	$$
	\lambda(I) \leq A |I|^s
	$$
for every interval $I$. Then there are constants $a$ and $b > 0$
and a function $\theta$, such that $\lim_{x \to \infty} \theta(x)  = 1$
and
	$$
	\# \left\{ \Dset \in D;	\, \Dset \subset J \text{ and }
	|\Dset| \geq x^{-1} \right\}
	\geq a + b(\log \lambda(J) + s \theta(x) \log x)
	$$
for every interval $J$ and every $x \geq (1 - 2c_1)^{-1}$.

\begin{proof}
Fix $J$ and $x$ and let $n$ be the unique integer such that
	$$
	(1 - 2c_{n + 2}) \prod_{i = 1}^{n + 1} c_i < x^{-1}
	\leq (1 - 2c_{n + 1}) \prod_{i = 1}^n c_i.
	$$
For any two elements of $D$ that have the same size there
is a larger element of $D$ that lies between them (with respect
to the order of \rea), so there is a unique largest $\Dset \in D$ that
intersects $J$. Then there is a connected component $J'$ of
$J \setminus \Dset$ such that $\lambda(J') \geq \lambda(J) / 2$,
and thus
	$$
	|J'| \geq \left( \frac{\lambda(J)}{2A} \right)^{1 / s}.
	$$
Let
	$$
	k = \ceil{\frac{\log(2A) - \log \lambda(J)}{s \log 2}}.
	$$
Then
	$$
	\prod_{i = 1}^k c_i \leq 2^{-k} \leq |J'|,
	$$
so $J'$ intersects a connected component of $[0, 1] \setminus C_k$.
That connected component is smaller than \Dset, and therefore
$\Dset \subset [0, 1] \setminus C_k$ as well.
It follows that $J'$ includes one of the connected components of
$C_k$, and hence there are at least $2^{n - k}$ elements of $D$ that
are included in $J$ and have size $(1 - 2c_{n + 1}) \prod_{i = 1}^n c_i$. Thus
	\begin{align*}
	\# \left\{ \Dset \in D;	\, \Dset \subset J \text{ and } |\Dset| \geq x^{-1} \right\}
	&\geq 2^{n - k}	\geq (n - k) \log 2 \\
	\geq
	-\frac{s\log 2 + \log(2A)}{s} + \frac{\log \lambda(J)}{s} &-
	\frac{n \log 2 \log x}{\log(1 - 2c_{n + 2}) + \sum_{i = 1}^{n + 1} \log c_i},
	\end{align*}
which has the desired form with
	$$
	a = -\frac{s\log 2 + \log(2A)}{s},
	\qquad
	b = \frac{1}{s}
	$$
and
	$$
	\theta(x) =	\frac{-n \log 2}%
	{\log(1 - 2c_{n + 2}) + \sum_{i = 1}^{n + 1} \log c_i}.
	$$
This proves the lemma since $n \to \infty$ when $x \to \infty$
and $c_i \to 1 / 2$ when $i \to \infty$.
\end{proof}
\end{lemma}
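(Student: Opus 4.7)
The plan is to exploit the hierarchical construction of $C$ and to quantify the statement in terms of the integer $n = n(x)$ chosen so that the holes destroyed at step $n + 2$ are just smaller than $x^{-1}$, while those destroyed at step $n + 1$ are at least $x^{-1}$; explicitly
\[
(1 - 2c_{n + 2})\prod_{i = 1}^{n + 1} c_i < x^{-1} \leq (1 - 2c_{n + 1})\prod_{i = 1}^{n} c_i.
\]
With this choice, every connected component of $C_k$ contains exactly $2^{n - k}$ elements of $D$ of length at least $x^{-1}$ for $0 \leq k \leq n$, so the task reduces to locating the largest $k$ such that $J$ contains a connected component of $C_k$.

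First I would shave off a single hole: among the elements of $D$ that meet $J$ there is a unique largest one, call it $\Dset_0$ (unique because two elements of $D$ of the same size are always separated by a larger one), and one of the two connected components $J'$ of $J \setminus \Dset_0$ satisfies $\lambda(J') \geq \lambda(J)/2$. The regularity hypothesis then gives
\[
|J'| \geq \Bigl(\tfrac{\lambda(J)}{2A}\Bigr)^{1/s}.
\]
Setting $k = \lceil (\log(2A) - \log \lambda(J)) / (s \log 2) \rceil$, the level-$k$ pieces (of length at most $2^{-k}$) fit inside $J'$, so $J'$ must cover some connected component of $C_k$. This component is disjoint from $\Dset_0$ and contained in $J$, and therefore contributes $2^{n - k}$ qualifying holes. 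Using $2^{n - k} \geq (n - k) \log 2$ and inserting the formulae for $k$ and $n$ yields the desired estimate with $a = -(s \log 2 + \log(2A))/s$, $b = 1/s$, and
\[
\theta(x) = \frac{-n \log 2}{\log(1 - 2c_{n + 2}) + \sum_{i = 1}^{n + 1} \log c_i}.
\]

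The hard part will be verifying $\theta(x) \to 1$, which is where both hypotheses on $(c_k)$ come in. Writing the denominator as $-(n + 1) \log 2 + \sum_{i = 1}^{n + 1} \log(2c_i) + \log(1 - 2c_{n + 2})$, the assumption $\prod_k 2c_k > 0$ makes $\sum_i \log(2c_i)$ a convergent series and hence $o(n)$, while $\log(1 - 2c_k)/k \to 0$ gives $\log(1 - 2c_{n + 2}) = o(n)$. Thus the denominator equals $-n \log 2 + o(n)$ and $\theta(x) \to 1$, because $n \to \infty$ as $x \to \infty$. A minor bookkeeping point is to check that the rounding in the definition of $k$, combined with the off-by-one in the bounds on $x^{-1}$, is absorbed into the additive constant $a$ and does not perturb the factor $s \theta(x) \log x$.
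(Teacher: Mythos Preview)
Your outline follows the paper's proof essentially step for step: the same choice of $n$, the same device of removing the single largest gap $\Dset_0$ and passing to a half $J'$ with $\lambda(J')\geq\lambda(J)/2$, the same $k$, the same count $2^{n-k}$, and the same formula for $\theta$. Your verification that $\theta(x)\to 1$ is in fact more explicit than the paper's, which simply cites $c_i\to 1/2$.

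There is one step you pass over too quickly. You write that since the level-$k$ pieces have length at most $2^{-k}\leq|J'|$, ``$J'$ must cover some connected component of $C_k$.'' Length alone does not give this: an interval of length $\ell$ need not contain any member of a tiling by intervals of length $\leq\ell$; it could straddle a gap and cover two level-$k$ pieces only partially. The paper closes this by an extra observation: since $|J'|$ exceeds the level-$k$ interval length, $J'$ meets some component $G$ of $[0,1]\setminus C_k$; this $G$ lies in $D$, meets $J$, and is distinct from $\Dset_0$, hence $|G|<|\Dset_0|$; because gap sizes strictly decrease with level, $\Dset_0$ is itself a gap of level $\leq k$. Consequently the endpoint of $J'$ that abuts $\Dset_0$ is an endpoint of a level-$k$ component, and then $|J'|\geq\prod_{i\leq k}c_i$ forces $J'$ to contain that whole component. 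Once you insert this short argument your proof is complete and matches the paper's.
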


\begin{proof}[Proof of Theorem~\ref{thm2}]
By Lemma~\ref{tlemma} there is some $t \in \rea$ and a probability measure
$\lambda$ on $C \cap (F + t)$ such that
	$$
	\lambda(I) \leq A |I|^s
	$$
for every interval $I$. Take any $\varepsilon > 0$. Since $\lambda$ is
a probability measure on $C$, it follows from Lemma~\ref{holelemma}
that there constants $a$ and $b > 0$ and some $x_0$ such that if
$x \geq x_0$ then
	\begin{align*}
	\psi(J, x) &= \# \left\{ \Dset \in D;
	\, \Dset \subset J \text{ and }
	|\Dset|^m \delta(|\Dset|) \geq x^{-1} \right\} \\
	&\geq
	\# \left\{ \Dset \in D;
	\, \Dset \subset J \text{ and }
	|\Dset|^{m + \alpha + \varepsilon} \geq x^{-1} \right\} \\
	&=
	\# \left\{ \Dset \in D;
	\, \Dset \subset J \text{ and }
	|\Dset| \geq x^{-1 / (m + \alpha + \varepsilon)} \right\} \\
	&\geq
	a + b\left(\log \lambda(J) +
	\left(\frac{s}{m + \alpha + \varepsilon} - \varepsilon\right) \log x\right),
	\end{align*}
Thus $\dimf f_\omega \lambda \geq s / (m + \alpha + \varepsilon) - \varepsilon$
almost surely by Theorem~\ref{thm1}, and letting $\varepsilon \to 0$ along a
countable set shows that $\dimf f_\omega \lambda \geq s / (m + \alpha)$ almost surely.
\end{proof}

\begin{proof}[Proof of Theorem~\ref{endthm1}]
Let $s = \dimh F$. If $s = 0$ then the statement is trivial, so assume that $s > 0$
and let $(d_k)_{k = 1}^\infty$ be an increasing sequence of positive numbers that converges
to $s$. Then there are disjoint open intervals $\{J_k\}_{k = 1}^\infty$
such that $\hmeas^{d_k}(F \cap J_k) > 0$ for all $k$. They can be
constructed recursively along with open intervals $\{I_k\}_{k = 1}^\infty$
as follows.

Let $I_1 = \rea$. Assuming that
$I_k$ has been defined and that $\dimh F \cap I_k = s$, there is a compact
subset $F_k$ of $F \cap I_k$ such that $0 < \hmeas^{d_k}(F_k) < \infty$
(see \cite[Theorem~48]{rogers70}). Let $x_k$ be a point such that
	$$
	\hmeas^{d_k}(F_k \cap (-\infty, x_k]) = \hmeas^{d_k}(F_k \cap [x_k, \infty)).
	$$
Then $I_k \setminus \{x_k\}$ is a disjoint union of two open intervals. Choose $I_{k + 1}$
to be one of these intervals so that $\dimh(F \cap I_{k + 1}) = s$, and let $J_k$ be
the other interval. Note that $\hmeas^{d_k}(F \cap J_k) \geq \hmeas^{d_k}(F_k) / 2 > 0$.

Take $E = C$ in the construction of $f_\omega$. By Theorem~\ref{thm2}, there is for
each $k$ some $t_k \in \rea$ and $\omega_k \in \Omega$ such that
	$$
	\dimf f_{\omega_k}(C \cap (F \cap J_k + t_k)) \geq \frac{d_k}{m + \alpha}.
	$$
Let $a_k = \inf J_k \cap (C - t_k)$ and $b_k = \sup J_k \cap (C - t_k)$.
Define $g_k$ on $[a_k, b_k]$ by
	$$
	g_k(x) = (f_{\omega_k}(x + t_k) - x) - (f_{\omega_k}(a_k + t_k) - a_k),
	$$
and set $g_k$ to $0$ on $(-\infty, a_k)$ and to $g_k(b_k)$ on $(b_k, \infty)$.
By Theorem~\ref{equicontthm} the function $f_{\omega_k}$ is $m$ times
differentiable and $f_{\omega_k}^{(m)}$ is uniformly continuous
with modulus $2 \| \varphi^{(m + 1)} \|_\infty \delta$, and it is clear from
the way that $f_\omega$ was extended to \rea (see Section~\ref{extsec})
that
	$$
	f_{\omega_k}' = 1
	\quad \text{and} \quad
	f_{\omega_k}'' = \ldots = f_{\omega_k}^{(m)} = 0 \text{ on } C.
	$$
It follows that $g_k$ is $m$ times differentiable and that
$g_k^{(m)}$ is uniformly continuous with modulus
$2 \| \varphi^{(m + 1)} \|_\infty \delta$.
Thus by Lemma~\ref{gensmoothlemma},
	$$
	g(x) = \sum_{k = 1}^\infty g_k(x)
	$$
is $m$ times differentiable and $g^{(m)}(x)$ is uniformly continuous
with modulus $4 \| \varphi^{(m + 1)} \|_\infty \delta$.
Let
	$$
	f(x) = x + g(x).
	$$
For $x \in (a_n, b_n)$
	$$
	f(x) = \text{const.} + f_{\omega_n}(x + t_n),
	$$
and thus
	\begin{align*}
	&\dimf f(F) \geq \sup_n \, \dimf f(F \cap (a_n, b_n)) \geq \sup_n \frac{d_n}{m + \alpha}
	= \frac{s}{m + \alpha}.
	\qedhere
	\end{align*}
\end{proof}

\bibliographystyle{plain}
\bibliography{references}
\end{document}